\theoremstyle{definition} 
\newtheorem{defn}{Definition}
\newtheorem*{notation}{Notation}
\theoremstyle{plain}
\newtheorem{prop}[defn]{Proposition}
\newtheorem{lemma}[defn]{Lemma}
\newtheorem{theorem}[defn]{Theorem}
\theoremstyle{remark}
\newtheorem*{acks}{Acknowledgements}
\title[Local Hardy spaces]{A note on local Hardy spaces}
\author{Mikko Kemppainen}
\address{Department of Mathematics and Statistics, University of Helsinki, FI-00014 Helsinki, Finland}
\email{mikko.k.kemppainen@helsinki.fi}
\date{\today}
\begin{document}

\begin{abstract}
  We consider a non-negative self-adjoint operator $L$ satisfying
  generalized Gaussian estimates on a doubling metric measure space,
  and show that if $L$ has a spectral gap then the local and global
  Hardy spaces defined by means of appropriate square functions coincide.
\end{abstract}

\subjclass[2010]{42B30 (Primary); 42B35 (Secondary)}

\keywords{local square function, spectral gap, molecules, generalized Gaussian estimates}

\maketitle

\section{Introduction}

The real-variable Hardy space $H^1$ was introduced by
E. M. Stein and G. Weiss \cite{STEINHp,STEINWEISSHp} 
as a harmonic analytic substitute for the endpoint 
Lebesgue space $\mathcal{L}^1$. A larger local Hardy space $\mathfrak{h}^1$,
which is better suited for smooth Fourier multipliers, was
studied by D. Goldberg \cite{GOLDBERG}.

The classical setting is based on the Euclidean space $\mathbb{R}^n$ and 
arises, somewhat implicitly, from its Laplacian 
$\Delta = \partial^2/\partial x_1^2 + \cdots + \partial^2/\partial x_n^2$. 
The theory has since then been extended into various directions: 
to second order elliptic operators
\cite{HOFMANNDIV,SECONDORDER} and Schr\"odinger operators \cite{DZIU}
on the Euclidean space;
to the first order framework of Hodge--Dirac operators on 
Riemannian manifolds with doubling volume property \cite{AMR}
and to the corresponding local setting \cite{MORRIS};
to non-negative self-adjoint operators satisfying Davies--Gaffney
estimates of order $2$ on doubling metric measure spaces
\cite{HOFMANNHARDY}
and to operators with estimates of higher order \cite{KUNSTMANNUHL,UHLTHESIS,FREYTHESIS}.
Local Hardy spaces for operators with pointwise Gaussian upper bounds
have also been considered in \cite{DACHUNLOCAL,LIXINLOCAL}.

The main result of this article concerns non-negative self-adjoint
operators $L$ that have a
spectral gap in the sense that $\inf \sigma(L) =: \lambda_0 > 0$. 
In such a case, the size of any function is captured by its 
\emph{local} square function, which only takes into 
account short time diffusion of the semigroup $(e^{-tL})_{t>0}$.
Indeed, denoting by $E$ the spectral measure of $L$, we have
\begin{equation*}
  \frac{1}{4} \| f \|_2^2 = \Big\| \Big( \int_0^1 |tLe^{-tL}f|^2 \,
  \frac{dt}{t} \Big)^{1/2} \Big\|_2^2 + \int_{\lambda_0}^\infty
  \Big( \frac{\lambda}{2} + \frac{1}{4}\Big) e^{-2\lambda} \,
  dE_{f,f}(\lambda) ,
\end{equation*}
and therefore
\begin{equation*}
  \Big\| \Big( \int_0^1 |tLe^{-tL}f|^2 \,
  \frac{dt}{t} \Big)^{1/2} \Big\|_2 \geq c \| f \|_2 ,
\end{equation*}
where $c = (\frac{1}{4} - ( \frac{\lambda_0}{2} + \frac{1}{4} ) e^{-2\lambda_0})^{1/2} > 0$.
In other words, the global square function is controlled by its local
version.

Assume that $(M,d,\mu)$ is a doubling metric measure space:
there exists a number $n>0$ such that for every ball $B\subset M$ we have
\begin{equation*}
  \mu (\alpha B) \lesssim \alpha^n \mu (B) ,
\end{equation*}
whenever $\alpha \geq 1$.
Assume moreover that $L$ is a non-negative self-adjoint 
operator of `order $m\geq 2$' on the Lebesgue space $\mathcal{L}^2$ (see
Definition \ref{genGaussian}).

The Hardy space $H_L^1$ is defined by means of the conical square
function
\begin{equation*}
  Sf(x) = \Big( \int_0^\infty \frac{1}{\mu (B(x,t))}
  \int_{B(x,t)} |t^mL e^{-t^mL}f(y)|^2 \, d\mu (y) \, \frac{dt}{t}
  \Big)^{1/2} , \quad x\in M,
\end{equation*}
as a completion with respect to the norm 
$\| f \|_{H_L^1} = \| Sf \|_1$. Similarly, the local Hardy space
$\mathfrak{h}_L^1$ is defined by means of $S_{loc}$ in which $\int_0^\infty$
is replaced by $\int_0^1$ and the norm is defined as
$\| f \|_{\mathfrak{h}_L^1} = \| S_{loc}f \|_1 + \| f \|_1$.
From $\| Sf \|_1 \gtrsim \| f \|_1$ it is immediate that
$H_L^1 \subset \mathfrak{h}_L^1$. For the purposes of this article we may
think of these as incomplete subspaces of $\mathcal{L}^2$ in order
to avoid technical complications.

The main result (Theorem \ref{mainthm}) says that if 
$(e^{-tL})_{t>0}$ satisfies generalized Gaussian $(1,2)$-estimates
(see Definition \ref{genGaussian}) and
$L$ has a spectral gap, then actually $\mathfrak{h}_L^1 = H_L^1$. 

A prototypical example of a second order 
operator to which the result applies is the Schr\"odinger operator
$L=-\Delta + |x|^2$ on $\mathbb{R}^n$.
Moreover, by \cite[Theorem 3.1]{MAXMINGEN}, the heat kernel of
any generalized Schr\"odinger operator $L=(-\Delta)^{m/2} + V$ 
with non-negative locally integrable potential $V$ on $\mathbb{R}^n$
satisfies pointwise Gaussian upper bounds of order $m$
(and therefore also generalized Gaussian $(1,2)$-estimates)
whenever $m$ is an even integer greater than $n$.

\begin{notation}
  We denote by $\textsf{D} (L)$ and $\textsf{R} (L)$ the domain and range in 
  $\mathcal{L}^2$ of the linear operator $L$. 
  The notation $\| \cdot \|_{p\to 2}$ stands for the operator norm
  from $\mathcal{L}^p$ to $\mathcal{L}^2$. 
  The radius of a ball $B\subset M$ is denoted by $r_B$.
\end{notation}

\begin{acks}
The author is a member of the Finnish Centre of Excellence in Analysis and Dynamics Research.
\end{acks}

\section{Off-diagonal estimates}

In this section we discuss the generalized Gaussian estimates, which
in this form and generality were introduced by S. Blunck and P. C. 
Kunstmann in 
\cite{BLUNCKKUNSTMAX,BLUNCKKUNST,BLUNCKKUNSTLEGENDRE}.

\begin{defn}
\label{genGaussian}
  Let $m\geq 2$ and $p\in [1,2]$. 
  A family $(T_t)_{t>0}$ of linear operators on $\mathcal{L}^2$ is
  said to satisfy \emph{generalized Gaussian $(p,2)$-estimates 
  (of order $m$)} (abbreviated $\textup{GGE}_m(p,2)$) 
  if there exists a constant $c$ such that 
  for all $x,y\in M$ and all $t>0$ we have
  \begin{equation*}
    \| 1_{B(x,t)} T_{t^m} 1_{B(y,t)} \|_{p\to 2}
    \lesssim \mu (B(x,t))^{-(\frac{1}{p} - \frac{1}{2})}
    \exp \Big( -c \Big( \frac{d(x,y)}{t} \Big)^{\frac{m}{m-1}} \Big) .
  \end{equation*}
\end{defn}

For a ball $B\subset M$ we write $C_k(B) = 2^kB\setminus 2^{k-1}B$,
when $k\geq 1$, $C_0(B)= B$,
and $C_k^*(B) = 2^{k+1} B\setminus 2^{k-2}B$, 
$C_1^*(B) = 4B$, $C_0^*(B) = 2B$. 

The following result collects the required off-diagonal estimates.
A more systematic treatment can be found for instance in
M. Uhl's thesis \cite{UHLTHESIS}.

\begin{prop}
\label{offdiag}
  If $(T_t)_{t>0}$ satisfies $\textup{GGE}_m(p,2)$ with
  $1\leq p \leq 2$, then for every ball $B\subset M$ and every $k\geq 1$ 
  we have
  \begin{equation*}  
    \| 1_{C_k(B)} T_{t^m} 1_B \|_{p\to 2}
    \lesssim \mu (B)^{-n(\frac{1}{p} - \frac{1}{2})} 
      \Big( 1+\frac{r_B}{t} \Big)^{n(\frac{1}{p} - \frac{1}{2})} 2^{nk}
    \exp \Big( -c \Big( \frac{2^kr_B}{t} \Big)^{\frac{m}{m-1}} \Big) ,
    \quad t>0. 
  \end{equation*}
  Moreover, if $(T_t)_{t>0}$ satisfies $\textup{GGE}_m(2,2)$, then
  for every ball $B\subset M$ and every $k\geq 0$ we have
   \begin{equation*}  
    \| 1_{C_k(B)} T_{t^m} 1_{M\setminus C_k^*(B)} \|_{2\to 2}
    \lesssim \Big( \frac{t}{2^kr_B} \Big)^{n+2}, \quad t>0.
  \end{equation*}
\end{prop}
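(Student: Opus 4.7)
The plan is to derive both off-diagonal estimates from the hypothesis $\textup{GGE}_m$ by decomposing the relevant sets into sub-balls of radius $t$ with controlled overlap, applying the pointwise bound to each pair, and summing.

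For the first part, fix a maximal $t/2$-separated net $\{y_j\}$ in $B$; by doubling, the induced cover $\{B(y_j,t)\}$ of $B$ has bounded overlap and cardinality $N_1\lesssim(1+r_B/t)^n$, and an analogous net $\{x_i\}$ in $C_k(B)$ yields a cover of cardinality $N_2\lesssim 2^{nk}(1+r_B/t)^n$. Given $f\in\mathcal{L}^p$ supported in $B$, write $f=\sum_j 1_{A_j}f$ for an associated disjoint partition $\{A_j\}$ of $B$ with $A_j\subset B(y_j,t)$, and bound
\begin{equation*}
\| 1_{C_k(B)}T_{t^m}f\|_2^2\lesssim\sum_i\| 1_{B(x_i,t)}T_{t^m}f\|_2^2.
\end{equation*}
For each $i$, the triangle inequality, the hypothesis $\textup{GGE}_m(p,2)$, and Hölder in the sum over $j$ give
\begin{equation*}
\| 1_{B(x_i,t)}T_{t^m}f\|_2\lesssim \mu(B(x_i,t))^{-(1/p-1/2)}\Big(\sum_j e^{-cp'(d(x_i,y_j)/t)^{m/(m-1)}}\Big)^{1/p'}\|f\|_p.
\end{equation*}
The key geometric inputs are $d(x_i,y_j)\gtrsim 2^kr_B$ for $k\geq 2$, and $\mu(B(x_i,t))\gtrsim(t/2^kr_B)^n\mu(B)$ by doubling. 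Summing over $i$ via $N_2^{1/2}$ and invoking the elementary inequality $x^N e^{-cx^{m/(m-1)}}\lesssim e^{-c'x^{m/(m-1)}}$ (any $c'<c$) to absorb the accumulated powers of $2^kr_B/t$ into the exponential then delivers the stated bound. The case $k=1$ is handled separately, using the trivial estimate $e^{-c(2r_B/t)^{m/(m-1)}}\leq 1$ together with the fact that the Gaussian sum over the $t/2$-separated $y_j$'s is always bounded by a constant independent of all parameters; the explicit factor $2^n$ in the conclusion provides the necessary slack.

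For the second part the structure is identical but simpler, since $1/p-1/2=0$ eliminates the measure factor. Cover $C_k(B)$ and $M\setminus C_k^*(B)$ by $t$-balls centered in the respective sets. The geometric fact that $d(C_k(B),M\setminus C_k^*(B))\gtrsim 2^kr_B$ places every relevant pair at distance $\gtrsim 2^kr_B$. Summing $\textup{GGE}_m(2,2)$ across the two covers yields a Gaussian sum over $t/2$-separated centers that converges to a constant independent of all parameters. When $t\leq 2^kr_B$, the exponential dominates any polynomial, so $e^{-c(2^kr_B/t)^{m/(m-1)}}\lesssim(t/2^kr_B)^{n+2}$; when $t>2^kr_B$, the bound $(t/2^kr_B)^{n+2}\geq 1$ is trivial once one knows $T_{t^m}$ is bounded on $\mathcal{L}^2$, which follows from $\textup{GGE}_m(2,2)$ applied with $y=x$.

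The principal technical difficulty lies in the first part: carefully tracking the polynomial prefactors that accumulate from doubling, Hölder, and the two covers, and verifying they can all be absorbed into the exponential to match the clean stated form, at the cost of a slightly reduced constant in the exponent. Once the polynomial-absorption step is in place, the remaining bookkeeping is routine.
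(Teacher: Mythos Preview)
Your approach is correct but differs from the paper's. For the first estimate, the paper does not build the scale-$t$ covers directly; instead it invokes \cite[Lemma 2.6 b)]{UHLTHESIS}, which already gives the bound for the thin shells $(j+1)B\setminus jB$, and then writes $C_k(B)=\bigcup_{j=2^{k-1}}^{2^k-1}(j+1)B\setminus jB$ and sums. For the second estimate, the paper again avoids covering the unbounded set $M\setminus C_k^*(B)$ at scale $t$: it first proves the special case $\|1_B T_{t^m}1_{M\setminus 2B}\|_{2\to 2}\lesssim (t/r_B)^{n+2}$ by summing the shell estimate over $j\geq 2$, and then reduces the general $k$ to this and to the first part via the algebraic identities $1_{C_k(B)}=1_{2^kB}-1_{2^{k-1}B}$ and $1_{M\setminus C_k^*(B)}=1_{M\setminus 2^{k+1}B}+1_{2^{k-2}B}$. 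Your route is more self-contained and treats both parts uniformly; the paper's is shorter on the page because the covering work is outsourced to the cited lemma, and its indicator-function algebra for the second part sidesteps the need to control an infinite Gaussian sum over a net in an unbounded region (which you handle correctly, but which requires the observation that $\sum_j e^{-c(d(x,y_j)/t)^{m/(m-1)}}\lesssim 1$ uniformly for any $t/2$-separated family). One minor point: your claim that $\mathcal{L}^2$-boundedness of $T_{t^m}$ ``follows from $\textup{GGE}_m(2,2)$ applied with $y=x$'' is not literally true from a single ball, but it does follow from the same covering-and-summing mechanism you use elsewhere.
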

\begin{proof}
  Assume that $(T_t)_{t>0}$ satisfies $\textup{GGE}_m(p,2)$ with
  $1\leq p \leq 2$ and consider a ball $B\subset M$.
  By \cite[Lemma 2.6 b)]{UHLTHESIS} we have for all $j\geq 2$ and $t>0$,
  \begin{equation}
  \label{uhl}
    \| 1_{(j+1)B\setminus jB} T_{t^m} 1_B \|_{p\to 2}
    \lesssim \mu (B)^{-n(\frac{1}{p} - \frac{1}{2})} 
      \Big( 1+\frac{r_B}{t} \Big)^{n(\frac{1}{p} - \frac{1}{2})} j^n \exp \Big( -c \Big( \frac{jr_B}{t} \Big)^{\frac{m}{m-1}} \Big) .
  \end{equation}
  Let $k\geq 1$. Writing
  \begin{equation*}
    C_k(B) = \bigcup_{j=2^{k-1}}^{2^k-1} (j+1)B\setminus jB ,
  \end{equation*}
  and noticing that
  \begin{equation*}
    \sum_{j=2^{k-1}}^{2^k-1} j^n \lesssim 2^{nk} ,
  \end{equation*}
  we obtain from \eqref{uhl} that for all $t>0$,
  \begin{equation}
  \label{firstclaim}
    \| 1_{C_k(B)} T_{t^m} 1_B \|_{p\to 2}
    \lesssim \mu (B)^{-n(\frac{1}{p} - \frac{1}{2})} 
      \Big( 1+\frac{r_B}{t} \Big)^{n(\frac{1}{p} - \frac{1}{2})}
    2^{nk} \exp \Big( -c \Big( \frac{2^kr_B}{t} \Big)^{\frac{m}{m-1}} \Big) ,
  \end{equation}
  which proves the first case.
  
  For the second case we assume that $(T_t)_{t>0}$ satisfies 
  $\textup{GGE}_m(2,2)$. 
  Writing, for any ball $B\subset M$,
  \begin{equation*}
    M\setminus 2B = \bigcup_{j=2}^\infty (j+1)B\setminus jB 
  \end{equation*}
  and making use of the estimate
  \begin{equation}
  \label{decayest}
    \exp (-c\alpha^{\frac{m}{m-1}}) 
    \lesssim \alpha^{-n-2} \exp (-c'\alpha^{\frac{m}{m-1}})
  \end{equation}
  we see that for all $t>0$
  \begin{align*}
    \sum_{j=2}^\infty j^n \exp \Big( -c \Big( \frac{jr_B}{t} \Big)^{\frac{m}{m-1}} \Big)
    &\leq \sum_{j=2}^\infty \frac{t^n}{r_B^n} 
    \Big( \frac{jr_B}{t} \Big)^n \exp \Big( -c \Big( \frac{jr_B}{t} \Big)^{\frac{m}{m-1}} \Big) \\
    &\lesssim \sum_{j=2}^\infty \Big( \frac{t}{r_B} \Big)^{n+2} 
    j^{-2} \\
    &\lesssim \Big( \frac{t}{r_B} \Big)^{n+2} .
  \end{align*}
  From \eqref{uhl} we now obtain by self-adjointness that
  for every ball $B\subset M$,
  \begin{equation}
  \label{C_0}
    \| 1_B T_{t^m} 1_{M\setminus 2B} \|_{2\to 2}
    = \| 1_{M\setminus 2B} T_{t^m} 1_B \|_{2\to 2}
    \lesssim \Big( \frac{t}{r_B} \Big)^{n+2} ,
    \quad t>0.
  \end{equation}
  Moreover, writing $1_{C_1(B)} = 1_{2B} - 1_B$, we see that
  \begin{equation*}
  \begin{split}
     \| 1_{C_1(B)} T_{t^m} 1_{M\setminus C_1^*(B)} \|_{2\to 2}
     &=  \| 1_{2B\setminus B} T_{t^m} 1_{M\setminus 4B} \|_{2\to 2} \\
    &\leq \| 1_{2B} T_{t^m} 1_{M\setminus 4B} \|_{2\to 2}
    + \| 1_B T_{t^m} 1_{M\setminus 4B} \|_{2\to 2} \\
    &\lesssim \Big( \frac{t}{2r_B} \Big)^{n+2}
    \end{split}
  \end{equation*}
  Finally, for any ball $B\subset M$ and any $k\geq 2$, we have
  \begin{equation*}
    1_{C_k(B)} = 1_{2^kB} - 1_{2^{k-1}B} , \quad 
    1_{M\setminus C_k^*(B)} = 1_{M\setminus 2^{k+1}B} + 1_{2^{k-2}B}.
  \end{equation*}
  Therefore, by applying \eqref{C_0} with balls $2^kB$ and $2^{k-1}B$,
  and \eqref{firstclaim} with the ball $2^{k-2}B$, we obtain
  \begin{align*}
    \| 1_{C_k(B)} T_{t^m} 1_{M\setminus C_k^*(B)} \|_{2\to 2}
    &\leq \| 1_{2^kB} T_{t^m} 1_{M\setminus 2(2^kB)} \|_{2\to 2}
    + \| 1_{2^{k-1}B} T_{t^m} 1_{M\setminus 2(2^kB)} \|_{2\to 2} \\
    &+ \| 1_{C_2(2^{k-2}B)} T_{t^m} 1_{2^{k-2}B} \|_{2\to 2} \\
    &\lesssim \Big( \frac{t}{2^kr_B}\Big)^{n+2}
    + 2^{2n}
    \exp \Big( -c\Big( \frac{2^{k-2}r_B}{t} \Big)^{\frac{m}{m-1}} \Big) ,
    \quad t>0,
  \end{align*}
  where by \eqref{decayest} we have
  \begin{equation*}
    \exp \Big( -c\Big( \frac{2^{k-2}r_B}{t} \Big)^{\frac{m}{m-1}} \Big)
    \lesssim \Big( \frac{t}{2^kr_B}\Big)^{n+2} ,
  \end{equation*}
  as required.
\end{proof}

The main result of this article relies on the following 
fact that a spectral gap implies exponential decay in time.
Recall \cite[Lemma 2.9]{UHLTHESIS}: if $(e^{-tL})_{t>0}$
satisfies $\textup{GGE}_m(2,2)$, then for any integer $j\geq 0$, 
also $((tL)^j e^{-tL})_{t>0}$ satisfies $\textup{GGE}_m(2,2)$.

\begin{prop}
\label{expdecay}
  If $(e^{-tL})_{t>0}$ satisfies $\textup{GGE}_m(2,2)$ and
  $\inf \sigma (L) > 0$, then, for small $\delta > 0$, also
  $(e^{-t(L-\delta)})_{t>0}$ satisfies $\textup{GGE}_m(2,2)$.
  In this case, for some $\delta > 0$ we have
  \begin{equation*}
    \| 1_{E'} t^mLe^{-t^mL} 1_E \|_{2\to 2} \lesssim e^{-\delta t^m},
    \quad t>0,
  \end{equation*}
  whenever $E,E'\subset M$. Moreover, for every ball $B\subset M$ 
  and every $k\geq 0$ we have 
  \begin{equation*}
    \| 1_{C_k(B)} t^mL e^{-t^mL} 1_{M\setminus C_k^*(B)} \|_{2\to 2}
    \lesssim e^{-\delta t^m} \Big( \frac{t}{2^kr_B} \Big)^{n+2},
    \quad t>0 .
  \end{equation*}
\end{prop}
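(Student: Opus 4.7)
The plan is to handle the three statements in order. The first supplies the shifted semigroup $e^{-t(L-\delta)}$, to which the off-diagonal machinery of Proposition \ref{offdiag} can then be applied in the third. The second statement, by contrast, follows directly from spectral calculus and does not need the shifted semigroup.

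For the first statement I would write $e^{-t(L-\delta)} = e^{\delta t}e^{-tL}$ and combine two upper bounds for the left-hand side of the desired GGE inequality: the spatial Gaussian decay $\exp(-c(d(x,y)/t)^{m/(m-1)})$ coming from $\textup{GGE}_m(2,2)$ for $e^{-tL}$, and the time decay $e^{-\lambda_0 t^m}$ coming from the operator norm bound $\|e^{-t^mL}\|_{2\to 2} \leq e^{-\lambda_0 t^m}$. A case split on whether $\delta t^m$ exceeds $\tfrac{c}{2}(d(x,y)/t)^{m/(m-1)}$ then absorbs the spurious factor $e^{\delta t^m}$ into either the spatial Gaussian (in the first regime) or the spectral-gap time decay (in the second, where $(d(x,y)/t)^{m/(m-1)}$ is dominated by a constant multiple of $t^m$), provided $\delta$ is small enough (e.g.\ $\delta < \lambda_0/2$). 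This yields $\textup{GGE}_m(2,2)$ for $e^{-t(L-\delta)}$ with a reduced constant $c' > 0$, and is the main technical step of the proof.

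The second statement follows directly from functional calculus: $\|t^mLe^{-t^mL}\|_{2\to 2}$ equals $\sup_{\lambda \geq \lambda_0} t^m\lambda e^{-t^m\lambda}$, which is $\leq 1/e$ when $t^m\lambda_0 \leq 1$ and equal to $t^m\lambda_0 e^{-\lambda_0 t^m}$ otherwise. Both cases are $\lesssim e^{-\delta t^m}$ for any fixed $\delta < \lambda_0$, and the truncations by $1_{E'}$ and $1_E$ only decrease the norm.

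For the third statement I would use the algebraic identity
\begin{equation*}
  t^mLe^{-t^mL} = e^{-\delta t^m}\bigl[t^m(L-\delta)e^{-t^m(L-\delta)} + \delta t^m\,e^{-t^m(L-\delta)}\bigr].
\end{equation*}
By the first statement $(e^{-t(L-\delta)})_{t>0}$ satisfies $\textup{GGE}_m(2,2)$, and by the recalled result from Uhl's thesis so does $(t(L-\delta)e^{-t(L-\delta)})_{t>0}$. Applying the second part of Proposition \ref{offdiag} to each of these two families produces the desired annular bound $(t/(2^kr_B))^{n+2}$ for each term, and the resulting polynomial factor $(1+\delta t^m)$ is absorbed into $e^{-\delta' t^m}$ for any $\delta' \in (0,\delta)$, which completes the proof up to renaming $\delta$.
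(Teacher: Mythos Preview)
Your proof is correct, and for the third statement it coincides with the paper's argument: the paper uses exactly the identity $t^mLe^{-t^mL} = e^{-\delta t^m}\bigl[t^m(L-\delta)e^{-t^m(L-\delta)} + \delta t^m\,e^{-t^m(L-\delta)}\bigr]$, applies Proposition~\ref{offdiag} to each bracketed term, and absorbs the factor $(1+\delta t^m)$ into the exponential. The paper also derives the second statement from this same identity together with uniform boundedness of the bracketed operators; your direct spectral-calculus computation of $\sup_{\lambda\geq\lambda_0} t^m\lambda e^{-t^m\lambda}$ is a harmless simplification that avoids invoking part~1 for that step.

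The genuine difference is in the first statement. The paper appeals to a Phragm\'en--Lindel\"of argument, generalizing \cite[Proposition~2.2]{SIKORACOUL} from order~$2$ to order~$m$: one considers the bounded analytic function $F(z) = \langle e^{-z(L-\delta)}f, g\rangle$ on $\mathbb{C}_+$, observes that $|F(t)| \lesssim \exp(\delta t - \gamma t^{-1/(m-1)})$ with $\gamma = d(E,E')^{m/(m-1)}$, and uses complex analysis to remove the spurious $e^{\delta t}$ factor. Your real-variable case split---combining the spatial Gaussian from $\textup{GGE}_m(2,2)$ with the spectral-gap bound $\|e^{-t^mL}\|_{2\to 2} \leq e^{-\lambda_0 t^m}$ and splitting according to whether $\delta t^m$ is dominated by $\tfrac{c}{2}(d(x,y)/t)^{m/(m-1)}$---achieves the same conclusion without any complex analysis and without adapting an external lemma. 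The Phragm\'en--Lindel\"of route is more robust (it needs only analyticity and boundedness on $\mathbb{C}_+$, not the explicit operator-norm decay), but in the present self-adjoint setting your argument is more elementary and fully self-contained.
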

\begin{proof}
  The first claim follows by a straightforward 
  generalization of \cite[Proposition 2.2]{SIKORACOUL}:
  \emph{If $F$ is a bounded analytic function on $\mathbb{C}_+$ and
  $|F(t)| \lesssim \exp (\delta t -\gamma t^{-\frac{1}{m-1}})$
  for all $t>0$, then
  $|F(t)| \lesssim \exp (-\gamma t^{-\frac{1}{m-1}})$ for all $t>0$.}
  The same proof applies with the modification 
  $u(\zeta) = F((\gamma/\zeta)^{m-1})$. This can then be applied to
  $F(z) = \langle e^{-z(L-\delta)} f, g \rangle$ and
  $\gamma = d(E,E')^{\frac{m}{m-1}}$ for arbitrary
  $f,g\in\mathcal{L}^2$ whenever $0 < \delta < \inf \sigma (L)$.
  
  For such $\delta$ we then have
  \begin{equation*}
    \| 1_{E'} t^mL e^{-t^m(L-\delta)} 1_E \|_{2\to 2}
    \leq \| 1_{E'} t^m(L-\delta) e^{-t^m(L-\delta)} 1_E \|_{2\to 2} 
    + \delta t^m \| 1_{E'} e^{-t^m(L-\delta)} 1_E \|_{2\to 2},
  \end{equation*}
  for all $t>0$ whenever $E,E'\subset M$, and therefore
  \begin{equation*}
    \| 1_{E'} t^mLe^{-t^mL} 1_E \|_{2\to 2} \lesssim 
    (1 + \delta t^m) e^{-\delta t^m} \lesssim e^{-\delta t^m / 2},
    \quad t>0,
  \end{equation*}
  by uniform boundedness. Moreover, Proposition \ref{offdiag} implies 
  that for every ball $B\subset M$ and every $k\geq 0$ we have
  \begin{equation*}
    \| 1_{C_k(B)} t^mL e^{-t^mL} 1_{M\setminus C_k^*(B)} \|_{2\to 2}
    \lesssim e^{-\delta t^m /2}
    \Big( \frac{t}{2^kr_B} \Big)^{n+2} , \quad t>0,
  \end{equation*}
  as required.
\end{proof}

\section{Molecular decomposition}

In this section we establish a molecular decomposition
on the local Hardy space $\mathfrak{h}_L^1$.
For comparison, see \cite[Subsection 4.3]{UHLTHESIS},
\cite[Chapter 5]{HOFMANNHARDY}, \cite[Subsection 7.1]{MORRIS}
and \cite[Theorem 1.3]{LIXINLOCAL}.

\begin{theorem}
\label{moleculardec}
  Assume that $(e^{-tL})_{t>0}$ satisfies $\textup{GGE}_m(1,2)$.
  Then every $f\in \mathfrak{h}^1_L \cap \textsf{R} (L)$ admits a decomposition into 
  $N$-molecules $a_j$ for any integer $N\geq 1$ in the sense that
  \begin{equation*}
    f = \sum_j \lambda_j a_j , \quad \textup{where} \quad
    \sum_j |\lambda_j| \eqsim \| f \|_{\mathfrak{h}^1_L} .
  \end{equation*}
  Here the series converges in $\mathcal{L}^2$.
\end{theorem}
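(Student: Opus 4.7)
The plan is to combine a tent-space atomic decomposition with a local Calder\'on-type reproducing formula. Given $f \in \mathfrak{h}_L^1 \cap \textsf{R}(L)$, I would first lift $f$ to the local tent space by setting $F(y,t) := t^m L e^{-t^m L}f(y)$ on $M \times (0,1)$; by definition of $S_{loc}$, the function $F$ lies in the local tent space $T^{1,2}$ over $(0,1)$ with $\|F\|_{T^{1,2}} = \|S_{loc}f\|_1 \leq \|f\|_{\mathfrak{h}_L^1}$. A local version of the Coifman--Meyer--Stein atomic decomposition then yields $F = \sum_j \lambda_j A_j$ in $\mathcal{L}^2(M \times (0,1), d\mu\, dt/t)$, with $\sum_j |\lambda_j| \lesssim \|F\|_{T^{1,2}}$, each tent atom $A_j$ being supported in a truncated Carleson tent $B_j \times (0, r_{B_j})$ with $r_{B_j} \leq 1$, normalized so that $\|A_j\|_2 \leq \mu(B_j)^{-1/2}$.

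For the synthesis step, I would use the reproducing formula valid on $\textsf{R}(L)$: for a suitable constant $c_N$,
\begin{equation*}
  f = c_N \int_0^1 (t^m L)^N e^{-t^m L} F(\cdot,t)\, \frac{dt}{t} + \psi(L)f,
\end{equation*}
where $\psi(\lambda) = c_N \int_1^\infty (t^m\lambda)^{N+1} e^{-2t^m\lambda}\, \frac{dt}{t}$ is bounded, with exponential decay at infinity. Substituting the tent decomposition into the first integral produces $\sum_j \lambda_j a_j$, where
\begin{equation*}
  a_j = c_N \int_0^{r_{B_j}} (t^m L)^N e^{-t^m L} A_j(\cdot,t)\, \frac{dt}{t} \in \textsf{R}(L^N).
\end{equation*}
Verifying that each $a_j$ is an $N$-molecule associated to $B_j$ reduces to estimating $\|1_{C_k(B_j)} a_j\|_2$ by the tent-atom $\mathcal{L}^2$ bound together with Proposition \ref{offdiag} applied to $((t^m L)^N e^{-t^m L})_{t>0}$, which inherits $\textup{GGE}_m(1,2)$; the $t$-integral is controlled via the off-diagonal exponential factor $\exp(-c(2^k r_{B_j}/t)^{m/(m-1)})$.

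For the residual $\psi(L)f$, I would fix a Whitney-type covering $(B_k)$ of $M$ by unit balls with bounded overlap together with a subordinate partition of unity $(\chi_k)$, and write $\psi(L)f = \sum_k \psi(L)(\chi_k f)$. Setting $\mu_k := \|\chi_k f\|_1$ and $b_k := \mu_k^{-1}\psi(L)(\chi_k f)$, one checks that each $b_k$ is a molecule associated to $B_k$: the required Gaussian-type $\mathcal{L}^1 \to \mathcal{L}^2$ off-diagonal estimates for $\psi(L)$ follow by integrating the $\textup{GGE}_m(1,2)$ bounds for $(t^m L)^{N+1}e^{-2t^m L}$ over $t \geq 1$. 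The bounded overlap of $(B_k)$ then gives $\sum_k \mu_k \lesssim \|f\|_1 \leq \|f\|_{\mathfrak{h}_L^1}$, which together with the tent-decomposition coefficient bound establishes the forward estimate of the theorem. The reverse inequality reduces to the uniform bound $\|a\|_{\mathfrak{h}_L^1} \lesssim 1$ for $N$-molecules $a$, itself a direct consequence of Proposition \ref{offdiag} applied to $S_{loc}$.

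The main technical difficulty will be the careful verification of the molecule size estimates, especially balancing the off-diagonal exponential $\exp(-c(2^k r_{B_j}/t)^{m/(m-1)})$ against the doubling factor $\mu(2^k B_j)^{1/2}$ over the full range $0 < t < r_{B_j}$ in order to extract polynomial decay in $k$ at a rate matching the prescribed $N \geq 1$; an analogous balance must be struck in the $t \geq 1$ integral for $\psi(L)$ when handling the residual.
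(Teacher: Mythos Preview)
Your overall architecture matches the paper's: a local Calder\'on formula splits $f$ into a piece synthesized from tent-space atoms (giving cancellative molecules) and a remainder $\psi(L)f$ that is cut up over unit balls (giving noncancellative molecules). The genuine gap is in your treatment of the remainder. You propose to obtain $(1,2)$ off-diagonal bounds for $\psi(L)$ by ``integrating the $\textup{GGE}_m(1,2)$ bounds for $(t^mL)^{N+1}e^{-2t^mL}$ over $t\geq 1$.'' But for a unit ball $B$ and any $l\geq 1$, Proposition~\ref{offdiag} only gives
\[
  \bigl\| 1_{C_l(B)}(t^mL)^{N+1}e^{-2t^mL}1_B \bigr\|_{1\to 2}
  \lesssim \mu(B)^{-1/2}\,2^{nl}\exp\Bigl(-c\bigl(2^l/t\bigr)^{m/(m-1)}\Bigr),
\]
and for $t\gg 2^l$ the exponential is essentially $1$, so $\int_1^\infty(\cdots)\,\frac{dt}{t}$ diverges; there is no spectral gap assumed in this theorem, hence no decay in $t$ to save the tail. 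The paper circumvents this by an integration-by-parts lemma which identifies $\psi(\lambda)$ \emph{exactly} as a polynomial times $e^{-2\lambda}$, so that $\psi(L)=\sum_{j=0}^{N+1}c_jL^je^{-2L}$. Each summand $L^je^{-2L}$ lives at a single time scale, and the $\textup{GGE}_m(1,2)$ estimate applied once (with no $t$-integration) immediately yields the molecule bounds for $L^je^{-2L}(1_Qf)$.

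A smaller point: for the tent-atom piece you write that verifying $a_j$ is an $N$-molecule ``reduces to estimating $\|1_{C_k(B_j)}a_j\|_2$''. Since $r_{B_j}\leq 1$, the molecules must be cancellative, so you also need the full hierarchy $\|1_{C_k(B_j)}(r_{B_j}^mL)^i b_j\|_2$ for $i=0,\dots,N$, where $a_j=L^Nb_j$. The same duality and off-diagonal argument handles these, but the paper arranges one extra power of $t^mL$ in the synthesis operator (using $(t^mL)^{N+1}e^{-t^mL}$ rather than $(t^mL)^N e^{-t^mL}$) so that every level $i$ still carries a factor $(t^mL)^{i+1}$ and the estimates are uniform.
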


\begin{defn}
  Let $N\geq 1$ be an integer.
  A function $a\in \mathcal{L}^2$ is called an \emph{$N$-molecule} 
  associated with a
  ball $B\subset M$ if
  \begin{equation*}
    \| 1_{C_k(B)} a \|_2 \leq 2^{-k} \mu(2^kB)^{-1/2}, \quad k\geq 0,
  \end{equation*}
  and either $r_B \geq 1$, or $r_B\leq 2$ and  
  $a$ is cancellative in the following sense:
  there exists a $b\in \textsf{D} (L^N)$ such that $a = L^Nb$ and
  for all $j=0,1,\ldots , N$ we have
  \begin{equation*}
    \| 1_{C_k(B)} (r^m_BL)^j b \|_2 \leq r^{mN}_B 2^{-k} \mu (2^kB)^{-1/2}, \quad k\geq 0.
  \end{equation*}
\end{defn}

\begin{lemma}
  For any integer $N\geq 1$ there exist constants 
  $c_0,c_1,\ldots , c_{N+2}$
  such that
  \begin{equation*}
    f = c_{N+2} \int_0^1 (t^mL)^{N+2} e^{-2t^mL} f \, \frac{dt}{t}
    + c_{N+1}L^{N+1}e^{-2L}f + \cdots + c_1 Le^{-2L} f + c_0 e^{-2L}f
  \end{equation*}
  for any $f\in \textsf{R} (L)$.
\end{lemma}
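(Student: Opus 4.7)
The plan is to reduce the operator identity to a scalar identity on the spectrum of $L$ via the self-adjoint functional calculus. Concretely, since both the scalar integral $g(\lambda) := \int_0^1 (t^m\lambda)^{N+2} e^{-2t^m\lambda} \frac{dt}{t}$ and each $\lambda^j e^{-2\lambda}$ are bounded functions on $[0,\infty)$, it suffices to produce explicit constants $c_0,\ldots,c_{N+2}$ for which
\begin{equation*}
  1 \;=\; c_{N+2}\, g(\lambda) + \sum_{j=0}^{N+1} c_j \lambda^j e^{-2\lambda}
  \qquad \text{for all } \lambda \geq 0.
\end{equation*}
Once this pointwise identity is in hand, the spectral theorem yields the desired decomposition of $f$.

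To compute $g(\lambda)$ explicitly, I would substitute $s = t^m$, so $\frac{ds}{s} = m \frac{dt}{t}$, and then $u = 2s\lambda$, obtaining
\begin{equation*}
  g(\lambda) = \frac{1}{m \cdot 2^{N+2}} \int_0^{2\lambda} u^{N+1} e^{-u}\,du,
\end{equation*}
which up to a multiplicative constant is the lower incomplete gamma function $\gamma(N+2,2\lambda)$. The classical closed form
$\int_0^{2\lambda} u^{N+1} e^{-u}\,du = (N+1)!\bigl(1 - e^{-2\lambda}\sum_{j=0}^{N+1} \tfrac{(2\lambda)^j}{j!}\bigr)$
can be verified by observing that $\psi(\lambda) := e^{-2\lambda}\sum_{j=0}^{N+1} \tfrac{(2\lambda)^j}{j!}$ satisfies $\psi(0)=1$ together with the telescoping identity $\psi'(\lambda) = -\tfrac{2^{N+2}}{(N+1)!}\lambda^{N+1} e^{-2\lambda}$ that comes from differentiating a truncated Taylor series. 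Reading off coefficients then forces $c_{N+2} = m \cdot 2^{N+2}/(N+1)!$ and $c_j = 2^j/j!$ for $j = 0,1,\ldots,N+1$; the special value at $\lambda=0$ provides a handy sanity check since only the $c_0 e^{-2\lambda}$ term survives there.

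The one slightly subtle point is the interpretation of $\int_0^1 (t^mL)^{N+2} e^{-2t^mL} f \, \frac{dt}{t}$ as an $\mathcal{L}^2$-valued object. I would read it as the improper Riemann integral $\lim_{\varepsilon\to 0^+} \int_\varepsilon^1$, which for any $f \in \mathcal{L}^2$ agrees with the bounded operator $g(L)f$ by Fubini between the spectral integration in $\lambda$ and the Lebesgue integration in $t$; the interchange is legitimate because the inner scalar integral defining $g(\lambda)$ is uniformly bounded in $\lambda$. The hypothesis $f \in \textsf{R}(L)$ is natural here as it guarantees that $E_{f,f}(\{0\}) = 0$, eliminating any concern about the behaviour at the bottom of the spectrum. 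The main obstacle is not really an obstacle at all, only bookkeeping: both the substitution argument and the telescoping identity for $\psi$ are standard exercises.
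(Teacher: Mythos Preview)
Your argument is correct. Both your proof and the paper's reduce, via the spectral theorem, to a scalar identity in $\lambda$; the difference is in how that identity is verified. The paper starts from the full Calder\'on reproducing formula $1 = c_{N+2}\int_0^\infty (t^m\lambda)^{N+2}e^{-2t^m\lambda}\,\frac{dt}{t}$ (valid for $\lambda>0$), splits the integral at $t=1$, and then iteratively integrates by parts on the tail $\int_1^\infty$ to peel off the terms $c_j\lambda^j e^{-2\lambda}$. You instead evaluate the truncated integral $\int_0^1$ directly by the substitution $u=2t^m\lambda$, recognising it as the lower incomplete gamma function $\gamma(N+2,2\lambda)$ and invoking its standard closed form. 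Your route is marginally more direct and produces the explicit constants $c_{N+2}=m\,2^{N+2}/(N+1)!$ and $c_j=2^j/j!$; as you noted, since the resulting scalar identity holds even at $\lambda=0$, the hypothesis $f\in\textsf{R}(L)$ is in fact not needed for this particular lemma, though it is used later in the paper. The paper's inductive integration by parts avoids naming the incomplete gamma function but is otherwise equivalent.
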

\begin{proof}
  Fixing an integer $N\geq 1$ and an $f\in\textsf{R} (L)$
  we start from the usual Calder\'on reproducing formula
  \begin{equation*}
    f = c_{N+2} \Big( \int_0^1 + \int_1^\infty \Big) (t^mL)^{N+2} e^{-2t^mL} f \, \frac{dt}{t}
  \end{equation*}
  and show that
  \begin{equation}
  \label{intformula}
    c_{N+2} \int_1^\infty (t^mL)^{N+2} e^{-2t^mL} f \, \frac{dt}{t}
    = c_{N+1}L^{N+1}e^{-2L}f + \cdots + c_1 Le^{-2L} f + c_0 e^{-2L}f
  \end{equation}
  for some constants $c_0,c_1,\ldots ,c_{N+1}$.
  Now, integrating by parts we see that for any $j\geq 1$ and for
  all $\lambda \geq 0$ we have
  \begin{align*}
    \int_1^\infty (t^m\lambda)^{j+1} e^{-2t^m\lambda} \, \frac{dt}{t}
    &= -\frac{1}{2} \int_1^\infty (t^m\lambda)^j 
    \partial_t(e^{-2t^m\lambda}) \, dt \\
    &= -\frac{1}{2} \Big[ (t^m\lambda)^j e^{-2t^m\lambda} \Big]_{t=1}^\infty + \frac{1}{2} \int_1^\infty \partial_t ((t^m\lambda)^j) e^{-2t^m\lambda} \, dt \\
    &= -\frac{1}{2} \lambda^j e^{-2\lambda} + \frac{mj}{2}
    \int_1^\infty (t^m\lambda)^j e^{-2t^m\lambda} \, \frac{dt}{t} .
  \end{align*}
  Iterating this from $j=N+1$ to $j=0$ and using the spectral theorem
  we obtain \eqref{intformula}.
\end{proof}

\begin{proof}[Proof of Theorem \ref{moleculardec}]
  Let $f\in\mathfrak{h}_L^1 \cap \textsf{R} (L)$ and
  let $N\geq 1$ be an integer. We rewrite the reproducing formula as
  \begin{equation*}
    f = \pi_1u + \pi_2f , \quad u(\cdot , t) = t^mL e^{-t^mL}f ,
  \end{equation*}
  where
  \begin{equation*}
    \pi_1u = c_{N+2} \int_0^1 (t^mL)^{N+1} e^{-t^mL} u(\cdot , t) \,
    \frac{dt}{t}
  \end{equation*}
  and
  \begin{equation*}
    \pi_2f = c_{N+1}L^{N+1}e^{-2L}f + \cdots + c_1Le^{-2L}f 
    + c_0e^{-2L}f .
  \end{equation*}
  By atomic decomposition on local tent spaces 
  (see \cite[Theorem 3.6]{MORRIS} or \cite[Theorem 4.5]{NONNEGATIVE})
  we can write
  \begin{equation*}
    u = \sum_j \lambda_j u_j
  \end{equation*}
  where $u_j$ are \emph{tent atoms} in the sense that each is supported in
  a box $B \times (0,r_B)$ and has
  \begin{equation*}
    \Big( \int_0^{r_B} \| u(\cdot , t) \|_2^2 \, \frac{dt}{t} \Big)^{1/2} 
    \leq \mu (B)^{-1/2} . 
  \end{equation*}

  Claim: If $u$ is a tent atom in $B\times (0,r_B)$,
  then $\pi_1u$ is a constant multiple of a cancellative 
  $N$-molecule associated with $B$.
  Choosing
  \begin{equation*}
    b = \int_0^1 t^{m(N+1)} Le^{-t^mL}u(\cdot , t) \, \frac{dt}{t}
  \end{equation*}
  we have $L^Nb = \pi_1u$. Let $j=0,1,\ldots , N$.
  For any $g$ supported in $C_k(B)$, $k\geq 0$, 
  with $\| g \|_2 \leq 1$ we have
  \begin{align*}
    |\langle (r_B^mL)^jb , g \rangle |
    &= \Big| \int_M (r_B^mL)^j \int_0^1 t^{m(N+1)} Le^{-t^mL}
    u(\cdot , t) \, \frac{dt}{t} \, g \, d\mu \Big|  \\
    &= \Big| \int_0^{r_B\wedge 1} \int_B u(\cdot , t) (r_B^mL)^j t^{m(N+1)} Le^{-t^mL} g \, d\mu \, \frac{dt}{t} \Big|  \\
    &\leq \Big( \int_0^{r_B} \| u(\cdot , t) \|_2^2 \, \frac{dt}{t} \Big)^{1/2}
    \Big( \int_0^{r_B\wedge 1} \| 1_B (r_B^mL)^j t^{m(N+1)}Le^{-t^mL}g
    \|_2^2 \, \frac{dt}{t} \Big)^{1/2} \\
    &\leq \mu (B)^{-1/2} r_B^{mj}
    \Big( \int_0^{r_B\wedge 1} t^{2m(N-j)} \| 1_B (t^mL)^{j+1} e^{-t^mL}g
    \|_2^2 \, \frac{dt}{t} \Big)^{1/2} .
  \end{align*}
  For $k=0,1$ we simply estimate
  \begin{equation*}
    \| 1_B (t^mL)^{j+1} e^{-t^mL}g \|_2
    \lesssim \| g \|_2 \leq 1,
  \end{equation*}
  while for $k\geq 2$ we see that, since $t \leq r_B$,
  \begin{equation*}
    \| 1_B (t^mL)^{j+1} e^{-t^mL}g \|_2
    \lesssim 2^{nk} 
    \exp \Big( -c\Big( \frac{2^kr_B}{t} \Big)^{\frac{m}{m-1}} \Big)
    \| g \|_2
    \lesssim 2^{-\frac{n}{2}k} \exp (-c'2^k) .
  \end{equation*}
  Therefore for all $k\geq 0$ we have
  \begin{align*}
  \| 1_{C_k(B)} (r^m_BL)^j b \|_2 
  &\lesssim \mu (B)^{-1/2} 2^{-\frac{n}{2}k} \exp (-c'2^k)
  r^{mj}_B \Big( \int_0^{r_B} t^{2m(N-j) - 1}\, dt \Big)^{1/2} \\
  &\lesssim r^{mN}_B 2^{-k} \mu (2^kB)^{-1/2} ,
  \end{align*}
  and the claim has been verified.

  Turning to $\pi_2$ we cover $M$ with a countable 
  disjoint family $\mathcal{Q}$ of sets $Q$ each of which is contained
  in a ball $B_Q$ of radius one. Then we may write
  \begin{equation*}
    \pi_2f = \sum_{Q\in\mathcal{Q}} \Big( 
    c_{N+1}L^{N+1}e^{-2L}(1_Qf) + \cdots + c_1Le^{-2L}(1_Qf) 
    + c_0e^{-2L}(1_Qf) \Big) ,
  \end{equation*}
  where each $c_jL^j e^{-2L}(1_Qf)$ is a constant multiple of a 
  noncancellative atom associated with the ball $B_Q$: 
  since $(tLe^{-tL})_{t>0}$ satisfies $\textup{GGE}_m(1,2)$,
  it follows from Proposition \ref{offdiag} that 
  for every $k\geq 0$,
  \begin{equation*}
    \| 1_{C_k(B_Q)} L^j e^{-2L}(1_Qf) \|_2
    \lesssim \mu (B)^{-1/2} 2^{nk} \exp (-c2^{\frac{m}{m-1}k}) \| f \|_1
    \lesssim 2^{-k} \mu (2^kB)^{-1/2} .
  \end{equation*}
  This finishes the proof.
\end{proof}

\section{Main result}

\begin{theorem}
\label{mainthm}
  Assume that $(e^{-tL})_{t>0}$ satisfies $\textup{GGE}_m(1,2)$.
  If $\inf \sigma (L) > 0$, then 
  $\| Sf \|_1 \lesssim \| S_{loc}f \|_1 + \| f \|_1$, i.e.
  $\mathfrak{h}_L^1 = H_L^1$.
\end{theorem}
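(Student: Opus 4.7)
The plan is to reduce the main theorem to a uniform estimate on single molecules, then verify that estimate in two cases distinguished by the scale of the associated ball. Given $f \in \mathfrak{h}_L^1 \cap \textsf{R}(L)$, fix an integer $N$ (to be chosen large) and apply Theorem \ref{moleculardec} to write $f = \sum_j \lambda_j a_j$ with $\sum_j |\lambda_j| \lesssim \|f\|_{\mathfrak{h}_L^1}$ and $\mathcal{L}^2$-convergence. Since $S$ is $\mathcal{L}^2$-bounded by the spectral theorem, the problem reduces to showing $\|Sa\|_1 \lesssim 1$ uniformly over $N$-molecules $a$; granted this, $\|Sf\|_1 \leq \sum_j |\lambda_j| \|Sa_j\|_1 \lesssim \|f\|_{\mathfrak{h}_L^1}$, as required.

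For a cancellative molecule associated with a ball $B$ with $r_B \leq 2$, I would follow the classical molecular argument for $H_L^1$. Decompose $\|Sa\|_1 \leq \|1_{4B}Sa\|_1 + \sum_{k\geq 2} \|1_{C_k(B)}Sa\|_1$. The inner piece is handled by Cauchy--Schwarz together with $\mathcal{L}^2$-boundedness of $S$ and the size bound $\|a\|_2 \leq \mu(B)^{-1/2}$. For each far annulus $C_k(B)$, split the $t$-integral at $t = 2^{k-2}r_B$: for $t$ below this threshold the ball $B(x,t)$ is disjoint from $B$ and Proposition \ref{offdiag}, applied to the $\textup{GGE}_m(1,2)$ family $(t^mLe^{-t^mL})_{t>0}$, supplies exponential off-diagonal decay; for $t$ above it, cancellation enters, rewriting $t^mLe^{-t^mL}a = t^{-mN}(t^mL)^{N+1}e^{-t^mL}b$ and paying for the extra $L^{N+1}$ with a factor $t^{-mN}$ that, integrated against $\mu(2^kB)^{1/2}$, is summable in $k$ once $N$ is chosen large with respect to $n$ and $m$.

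For a non-cancellative molecule with $r_B \geq 1$ the cancellation step above is unavailable, and the spectral gap must take its place. Split $Sa \leq S_{loc}a + S_{far}a$, where $S_{far}$ corresponds to $t \in (1,\infty)$. On the short-time side, $t < 1 \leq r_B$ gives $r_B/t \geq 1$, so Proposition \ref{offdiag} already delivers decisive exponential off-diagonal decay without invoking cancellation; combining this with the molecule's annular decay $\|1_{C_\ell(B)} a\|_2 \leq 2^{-\ell}\mu(2^\ell B)^{-1/2}$ and Cauchy--Schwarz over spatial annuli around $B$ yields $\|S_{loc}a\|_1 \lesssim 1$. On the long-time side, Proposition \ref{expdecay} supplies the pointwise-in-time bound $\|t^mLe^{-t^mL}\|_{2\to 2} \lesssim e^{-\delta t^m}$ together with an annular refinement; Fubini on the conical integral and Cauchy--Schwarz over annuli around $B$ then produce $\|S_{far}a\|_1 \lesssim 1$.

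The main obstacle is clearly the non-cancellative case: three independent layers of decay---the molecule's spatial profile, the off-diagonal/exponential decay of $t^mLe^{-t^mL}$, and the conical averaging inside $S$---must be coordinated without the orthogonality that cancellation would provide, and a correct choice of splitting scales (time at $t=1$, space at $2^kr_B$) is essential. The quantitative exponential factor $e^{-\delta t^m}$ from Proposition \ref{expdecay} is precisely the new ingredient compared with the classical $H_L^1$ theory that makes the tail $t \geq 1$ integrable and closes the argument in the absence of cancellation.
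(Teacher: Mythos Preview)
Your strategy is correct and the core of the argument---using the exponential decay of Proposition~\ref{expdecay} to control the large-$t$ tail of $S$ on non-cancellative molecules, combined with an annular spatial decomposition---is exactly what the paper does. The cancellative case is likewise handled the same way (the paper simply cites the standard molecular bound $\|Sa\|_1\lesssim 1$ rather than sketching it).

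The one structural difference is in the reduction step. You decompose $f$ into molecules first and then aim for the full bound $\|Sa\|_1\lesssim 1$ on each molecule; for non-cancellative molecules this forces you to prove an auxiliary estimate $\|S_{loc}a\|_1\lesssim 1$. The paper instead splits $S\le S_{loc}+S_\infty$ \emph{before} passing to molecules, writing
\[
  \|Sf\|_1 \le \|S_{loc}f\|_1 + \sum_j |\lambda_j|\,\|S_\infty a_j\|_1 ,
\]
so that the term $\|S_{loc}f\|_1$ is already part of the right-hand side and never needs to be re-estimated molecule by molecule. This saves the extra $S_{loc}$ argument entirely. Your detour is harmless (the bound $\|S_{loc}a\|_1\lesssim 1$ for a non-cancellative molecule with $r_B\ge 1$ does follow from the $(2,2)$ off-diagonal estimates since $t\le 1\le r_B$), but the paper's ordering is more economical. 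For the $S_\infty$ part on non-cancellative molecules the paper makes the space--time decomposition explicit, covering $M\times(1,\infty)$ by the regions $C_k(B)\times(1,2^kr_B)$ and $2^{k-1}B\times(2^{k-1}r_B,2^kr_B)$, which is the concrete implementation of your ``Cauchy--Schwarz over annuli plus annular refinement'' sketch.
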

\begin{proof}
  The proof of the required estimate 
  $\| Sf \|_1 \lesssim \| S_{loc}f \|_1 + \| f \|_1$
  reduces to showing that $\| S_\infty a \|_1 \lesssim 1$
  for all $N$-molecules $a$ when $N$ is large enough (here
  $S_\infty$ is defined as $S$, but with $\int_1^\infty$ replacing
  $\int_0^\infty$).
  Indeed, an arbitrary $f\in\mathfrak{h}_L^1 \cap \mathcal{L}^2$ can be
  decomposed into $N$-molecules $a_j$ for any $N\geq 1$ so that
  $f = \sum_j \lambda_j a_j$, and therefore
  \begin{equation*}
    \| Sf \|_1 \leq \| S_{loc}f \|_1 + \sum_j |\lambda_j | 
    \| S_\infty a_j \|_1 .
  \end{equation*}
  
  Let $a$ be an $N$-molecule associated with a ball $B\subset M$.
  If $a$ is cancellative and $N > n/(2m)$, then 
  $\| S_\infty a \|_1 \leq \| Sa \|_1 \lesssim 1$ by 
  \cite[Lemma 4.13]{UHLTHESIS} 
  (see also \cite[Corollary 3.6]{KUNSTMANNUHL}).
  We may therefore assume that $a$ is noncancellative and $r_B\geq 1$.
  Consider the following decomposition (cf. \cite[Subsection 6.2]{AMR}):
  \begin{equation*}
    M \times (1,\infty) = \Big( \bigcup_{k=0}^\infty 
    C_k(B) \times (1, 2^kr_B) \Big) \cup
    \Big( \bigcup_{k=1}^\infty 
    2^{k-1}B \times (2^{k-1}r_B, 2^kr_B) \Big) .
  \end{equation*}
  Then, by H\"older's inequality,
  \begin{equation*}
    \begin{split}
    \| S_\infty a \|_1 &\lesssim \sum_{k=0}^\infty
    \mu (2^kB)^{1/2} \Big( \int_1^{2^kr_B} \| 1_{C_k(B)} t^mLe^{-t^mL} a
    \|_2^2 \, \frac{dt}{t} \Big)^{1/2} \\
    &+ \sum_{k=1}^\infty
    \mu (2^kB)^{1/2} \Big( \int_{2^{k-1}r_B}^{2^kr_B} \| 1_{2^{k-1}B} t^mLe^{-t^mL} a
    \|_2^2 \, \frac{dt}{t} \Big)^{1/2},
    \end{split}
  \end{equation*}
  and it suffices to show that,
  \begin{equation}
  \label{mainestimate1}
  \Big( \int_1^\infty \| 1_{C_k(B)} t^mLe^{-t^mL} a \|_2^2 \, \frac{dt}{t} \Big)^{1/2}
  \lesssim 2^{-k} \mu (2^kB)^{-1/2} , \quad k\geq 0,
  \end{equation}
  and (since $r_B \geq 1$)
  \begin{equation}
  \label{mainestimate2}
  \Big( \int_{2^{k-1}}^\infty \| 1_{2^{k-1}B} t^mLe^{-t^mL} a
    \|_2^2 \, \frac{dt}{t} \Big)^{1/2}
  \lesssim 2^{-k} \mu (2^kB)^{-1/2} , \quad k\geq 1.
  \end{equation}
  
  We prove \eqref{mainestimate1} for a fixed $k\geq 0$ 
  by studying $a$ in two 
  pieces $1_{C_k^*(B)}a$ and $1_{M\setminus C_k^*(B)}a$.
  Firstly, by Proposition \ref{expdecay}, for all $t>0$ we have
  \begin{equation*}
    \| 1_{C_k(B)} t^mLe^{-t^mL} (1_{C_k^*(B)}a) \|_2
    \lesssim e^{-\delta t} \| 1_{C_k^*(B)}a \|_2 ,
  \end{equation*}
  and therefore
  \begin{align*}
    \Big( \int_1^\infty \| 1_{C_k(B)} t^mLe^{-t^mL} (1_{C_k^*(B)}a)
    \|_2^2 \, \frac{dt}{t} \Big)^{1/2}
    &\lesssim \| 1_{C_k^*(B)}a \|_2
    \Big( \int_1^\infty e^{-2\delta t} \, \frac{dt}{t} \Big)^{1/2} \\
    &\lesssim 2^{-k} \mu (2^kB)^{-1/2} .
  \end{align*}
  Secondly, by Proposition \ref{expdecay}, for all $t>0$ we have
  \begin{equation*}
    \| 1_{C_k(B)} t^mLe^{-t^mL} (1_{M\setminus C_k^*(B)}a) \|_2
    \lesssim e^{-\delta t} \Big( \frac{t}{2^kr_B} \Big)^{n+2} \| a \|_2 .
  \end{equation*}
  Therefore
  \begin{align*}
    \Big( \int_1^\infty \| 1_{C_k(B)} t^mLe^{-t^mL} (1_{M\setminus C_k^*(B)}a)
    \|_2^2 \, \frac{dt}{t} \Big)^{1/2}
    &\lesssim (2^kr_B)^{-n-2} \| a \|_2
    \Big( \int_1^\infty e^{-2\delta t} t^{2n+3} \, dt \Big)^{1/2} \\
    &\lesssim 2^{-(n+2)k} \mu (B)^{-1/2} \\
    &\lesssim 2^{-k} \mu (2^kB)^{-1/2}
  \end{align*}
  where in the second step we used the assumption that $r_B \geq 1$.
  
  We then turn to \eqref{mainestimate2} and, fixing a $k\geq 1$, 
  divide $a$ into
  $1_{2^kB}a$ and $1_{M\setminus 2^kB}a$. Again, by Proposition
  \ref{expdecay}, for all $t>0$ we have
  \begin{equation*}
    \| 1_{2^{k-1}B} t^mLe^{-t^mL} (1_{2^kB} a) \|_2
    \lesssim e^{-\delta t} \| 1_{2^kB}a \|_2 ,
  \end{equation*}
  where
  \begin{equation*}
  \begin{split}
    \| 1_{2^kB}a \|_2 \leq \sum_{l=0}^k \| 1_{C_l(B)}a \|_2
    &\leq \sum_{l=0}^k 2^{-l}\mu (2^lB)^{-1/2}
    \lesssim 2^{nk/2} \mu(2^kB)^{-1/2} .
  \end{split}
  \end{equation*}
  Therefore
  \begin{equation*}
  \begin{split}
  &\Big( \int_{2^{k-1}}^\infty \| 1_{2^{k-1}B}
    t^mL e^{-t^mL}(1_{2^kB} a) \|_2^2 \, \frac{dt}{t} \Big)^{1/2} \\
    &\lesssim 2^{(n/2 + 1)k} \Big( \int_{2^{k-1}}^\infty e^{-2\delta t}
    \, \frac{dt}{t} \Big)^{1/2} 2^{-k} \mu(2^kB)^{-1/2} ,
  \end{split}
  \end{equation*}
  where
  \begin{equation*}
    \Big( \int_{2^{k-1}}^\infty e^{-2\delta t}
    \, \frac{dt}{t} \Big)^{1/2}
    \lesssim \Big( \int_{2^{k-1}}^\infty \frac{dt}{t^{n+3}} \Big)^{1/2}
    \eqsim 2^{-k(n/2+1)} ,
  \end{equation*}
  as required.
  Finally, noting that $2^{k-1}B = C_0(2^{k-1}B)$ and 
  $M\setminus 2^kB = M\setminus C_0^*(2^{k-1}B)$, we apply 
  Proposition \ref{expdecay} to see that for all $t>0$ we have
  \begin{equation*}
    \| 1_{2^{k-1}B} t^mLe^{-t^mL} (1_{M\setminus 2^kB} a) \|_2
    \lesssim e^{-\delta t} \Big( \frac{t}{2^kr_B} \Big)^{n+2} \| a \|_2
  \end{equation*}
  and therefore
  \begin{equation*}
  \begin{split}
    &\Big( \int_{2^{k-1}}^\infty \| 1_{2^{k-1}B}
    t^mL e^{-t^mL}(1_{M\setminus 2^kB} a) \|_2^2 \, \frac{dt}{t} \Big)^{1/2}\\
    &\lesssim (2^kr_B)^{-n-2} \| a \|_2
    \Big( \int_1^\infty e^{-2\delta t} t^{2n+3} \, dt \Big)^{1/2} \\
    &\lesssim 2^{-k} \mu (2^kB)^{-1/2},
  \end{split}
  \end{equation*}
  which concludes the proof.

\end{proof}

\bibliographystyle{plain}

\begin{thebibliography}{10}

\bibitem{NONNEGATIVE}
A.~Amenta and M.~Kemppainen.
\newblock Non-uniformly local tent spaces.
\newblock {\em Publ. Mat.}, 59(1):245--270, 2015.

\bibitem{AMR}
P.~Auscher, A.~McIntosh, and E.~Russ.
\newblock Hardy spaces of differential forms on {R}iemannian manifolds.
\newblock {\em J. Geom. Anal.}, 18(1):192--248, 2008.

\bibitem{BLUNCKKUNSTMAX}
S.~Blunck and P.~C. Kunstmann.
\newblock Weighted norm estimates and maximal regularity.
\newblock {\em Adv. Differential Equations}, 7(12):1513--1532, 2002.

\bibitem{BLUNCKKUNSTLEGENDRE}
S.~Blunck and P.~C. Kunstmann.
\newblock Generalized {G}aussian estimates and the {L}egendre transform.
\newblock {\em J. Operator Theory}, 53(2):351--365, 2005.

\bibitem{BLUNCKKUNST}
S.~Blunck and P.~C. Kunstmann.
\newblock Calder\'on-{Z}ygmund theory for non-integral operators and the
  {$H^\infty$} functional calculus.
\newblock {\em Rev. Mat. Iberoamericana}, 19(3):919--942, 2003.

\bibitem{MORRIS}
A. Carbonaro, A. McIntosh, and A.~J. Morris.
\newblock Local {H}ardy spaces of differential forms on {R}iemannian manifolds.
\newblock {\em J. Geom. Anal.}, 23(1):106--169, 2013.

\bibitem{SIKORACOUL}
T. Coulhon and A. Sikora.
\newblock Gaussian heat kernel upper bounds via the {P}hragm\'en-{L}indel\"of
  theorem.
\newblock {\em Proc. Lond. Math. Soc. (3)}, 96(2):507--544, 2008.

\bibitem{MAXMINGEN}
Q. Deng, Y. Ding, and X. Yao.
\newblock Maximal and minimal forms for generalized {S}chr\"odinger operators.
\newblock {\em Indiana Univ. Math. J.}, 63(3):727--738, 2014.

\bibitem{DZIU}
J. Dziuba{\'n}ski and J. Zienkiewicz.
\newblock Hardy space {$H^1$} associated to {S}chr\"odinger operator with
  potential satisfying reverse {H}\"older inequality.
\newblock {\em Rev. Mat. Iberoamericana}, 15(2):279--296, 1999.

\bibitem{FREYTHESIS}
D. Frey.
\newblock {\em Paraproducts via $H^\infty$-functional calculus and a
  $T(1)$-{T}heorem for non-integral operators}.
\newblock PhD thesis, Karlsruher Institut f\"ur Technologie (KIT), 2011.

\bibitem{GOLDBERG}
D. Goldberg.
\newblock A local version of real {H}ardy spaces.
\newblock {\em Duke Math. J.}, 46(1):27--42, 1979.

\bibitem{LIXINLOCAL}
R. M. Gong, J.~Li, and L.~Yan.
\newblock A local version of {H}ardy spaces associated with operators on metric
  spaces.
\newblock {\em Sci. China Math.}, 56(2):315--330, 2013.

\bibitem{HOFMANNHARDY}
S.~Hofmann, G.~Lu, D.~Mitrea, M.~Mitrea, and L.~Yan.
\newblock Hardy spaces associated to non-negative self-adjoint operators
  satisfying {D}avies-{G}affney estimates.
\newblock {\em Mem. Amer. Math. Soc.}, 214(1007):vi+78, 2011.

\bibitem{HOFMANNDIV}
S.~Hofmann and S.~Mayboroda.
\newblock Hardy and {BMO} spaces associated to divergence form elliptic
  operators.
\newblock {\em Math. Ann.}, 344(1):37--116, 2009.

\bibitem{SECONDORDER}
S.~Hofmann, S.~Mayboroda, and A.~McIntosh.
\newblock Second order elliptic operators with complex bounded measurable
  coefficients in {$L^p$}, {S}obolev and {H}ardy spaces.
\newblock {\em Ann. Sci. \'Ec. Norm. Sup\'er. (4)}, 44(5):723--800, 2011.

\bibitem{DACHUNLOCAL}
R. Jiang, D. Yang, and Y. Zhou.
\newblock Localized {H}ardy spaces associated with operators.
\newblock {\em Appl. Anal.}, 88(9):1409--1427, 2009.

\bibitem{KUNSTMANNUHL}
P.~C. Kunstmann and M. Uhl.
\newblock Spectral multiplier theorems of {H}\"ormander type on {H}ardy and
  {L}ebesgue spaces.
\newblock {\em J. Operator Theory}, 73(1):27--69, 2015.

\bibitem{STEINHp}
E. M. Stein.
\newblock Classes {$H^{p}$}, multiplicateurs et fonctions de
  {L}ittlewood-{P}aley.
\newblock {\em C. R. Acad. Sci. Paris S\'er. A-B}, 263:A716--A719, 1966.

\bibitem{STEINWEISSHp}
E.~M. Stein and G. Weiss.
\newblock On the theory of harmonic functions of several variables. {I}. {T}he
  theory of {$H^{p}$}-spaces.
\newblock {\em Acta Math.}, 103:25--62, 1960.

\bibitem{UHLTHESIS}
M. Uhl.
\newblock {\em Spectral multiplier theorems of {H}\"ormander type via
  generalized {G}aussian estimates}.
\newblock PhD thesis, Karlsruher Institut f\"ur Technologie (KIT), 2011.

\end{thebibliography}
\def\cprime{$'$} \def\cprime{$'$}

\end{document}